\documentclass[10pt]{amsart}
\usepackage{amsmath,amssymb,amsthm}

\usepackage{float}
\usepackage[all,cmtip]{xy}
\usepackage{tikz}
\usetikzlibrary{matrix}
\usetikzlibrary{cd}
\usepackage{subfig}
\usepackage[colorlinks,hyperindex,linkcolor=blue]{hyperref}
\hypersetup{
 pdfauthor = {Javier Martínez-Aguinaga and Sergio de María},
 pdftitle= {Isotopies between projectivizations of knots: a diagrammatic algorithm},
 pdfsubject = {Knot Theory, Geometric Topology, Algebraic Topology}}

\newcommand{\BB}{\mathbb{B}}

\newcommand{\PP}{\mathbb{P}}
\newcommand{\RR}{\mathbb{R}}

\newcommand{\ZZ}{\mathbb{Z}}

\newcommand{\NS}{\mathbb{S}}

\newcommand{\SO}{\textup{SO}}

\newtheorem{proposition}{Proposition}[section] 
\newtheorem{theorem}[proposition]{Theorem}
\newtheorem{definition}[proposition]{Definition}

\newtheorem{remark}[proposition]{Remark}

\newtheorem{example}[proposition]{Example}

\makeatletter
\newcommand{\superimpose}[2]{%
  {\ooalign{$#1\@firstoftwo#2$\cr\hfil$#1\@secondoftwo#2$\hfil\cr}}}
\makeatother

\title{A constructive solution to the equivalence problem for knot projectivizations}

\subjclass[2020]{Primary: 57K10. Secondary: 	57K12, 57K14.}

\author{Sergio de María}
\address{Universidad Complutense de Madrid, Departamento de Álgebra, Geometría y Topología, Facultad de Ciencias
Matemáticas. 28040, Madrid, Spain.}
\email{sergiodm@ucm.es}

\author{Javier Mart\'{i}nez-Aguinaga}
\address{Universidad Complutense de Madrid, Departamento de Álgebra, Geometría y Topología, Facultad de Ciencias
Matemáticas. 28040, Madrid, Spain.}
\email{frmart02@ucm.es}

\emergencystretch=\maxdimen
\begin{document}

\begin{abstract} 

The problem of whether different projectivizations of the same affine knot $K\subset\NS^3$ are equivalent in $\RR\PP^3$ can be found in \cite{Kauffman} and has also been posed as an open question in \cite{Narayanan2025}. In this note we provide a constructive solution to the problem. In particular, we adapt an idea due to A. Hatcher developed in the realm of embedding spaces and we describe an algorithm that produces an explicit isotopy between any two given projectivizations of the same affine knot. More generally, we introduce the notion of \textit{lensification} of a knot in any lens space $L(p,q)$ and describe an algorithm that works in that more general setting, of which $\RR\PP^3\simeq L(2,1)$ is a particular instance. Finally, we apply this algorithm to several pairs of knots from the literature for which the equivalence problem was raised as an open question, finding explicit isotopies.

\end{abstract}
\maketitle
\addtocontents{toc}{\setcounter{tocdepth}{1}}

\section{Introduction}\label{intro}

 There has been significant development in the Theory of Knots in the real projective space $\RR\PP^3$ in recent years; see \cite{viro, Mishra2023, ManolescuWillis, Kauffman, Narayanan2025, CattabrigaManfredi2018, manfredi2014knots, CATTABRIGA2013430}. In a recent article \cite{Kauffman}, L. H. Kauffman, R. Mishra and V. Narayanan relate the study of knots in the real projective space $\RR\PP^3$ with the Theory of Virtual Knots, showing deep connections between the two. In particular, they define a correspondence that assigns a virtual knot to a given diagram of a projective knot or link. This allows to import invariants from Virtual Knot Theory to the projective setting. 
 
 They define a Jones type polynomial invariant for projective knots, which they then show to be equivalent to another polynomial invariant in Projective Knot Theory due to Y. V. Drobotukhina \cite{Drobotukhina1990}. They also import the Khovanov Homology from virtual knots in order to construct a Khovanov-type homology for projective knots and links. Likewise, by considering the Rasmussen invariant for virtual knots, they obtain an analogous invariant in the projective setting. These last two invariants motivated by the Theory of Virtual knots turn out to be essentially equivalent to invariants previously introduced in the work \cite{ManolescuWillis} of C. Manolescu and M. Willis by very different methods, as noted in \cite{Kauffman}. This showcases the deep relationship between Virtual Knot Theory and the Theory of Knots in $\RR\PP^3$.

 Nonetheless, although much progress has been made in the projective setting, there were certain knots that could still not be distinguished by any of the existing methods. In Section 7 \textit{``What this method cannot see!''} of \cite{Kauffman}, L. H. Kauffman, R. Mishra and V. Narayanan  noted  that all projectivizations of the same given affine knot cannot be distinguished by any of the aforementioned projective invariants, since the virtual knot assigned to all of them by the process mentioned above is exactly the same. They thus wonder  whether different projectivizations of the same given knot could represent the same knot in $\RR\PP^3$. In particular, they discuss this problem for the projectivization of the figure eight knot (see Subsection \ref{firstapplication}). Likewise, V. Narayanan \cite[Sec. 5]{Narayanan2025} makes the following comment in this regard referring to projectivizations of the same knot: ``\textit{The virtual
 knots associated with these would be the same. Hence none of the invariants described above will distinguish them. It is an interesting question to ask whether these are isotopic to each other}''.

We first observe that an affirmative answer to the question can be derived from a purely theoretical argument by reinterpreting the projectivization of an affine knot $K$ as a connected sum of a class-$1$ unknot in $\RR\PP^3$ with $K$ (see Remark \ref{rem:conSumLocal}). Nonetheless, the goal of this note is not only to answer the question but rather to provide a constructive and explicit solution to the problem. More specifically, we will describe such a solution in a purely diagrammatic way by working in disk diagrams (Subsection \ref{sec:diskdiagram}).

 For that purpose, we will make use of an idea of A. Hatcher developed in the realm of embedding spaces (in particular, described in order to represent the \textit{Fox-Hatcher} loop of long embeddings). Precisely, working in a disk diagram,  we will show how to diagrammatically represent an explicit isotopy between any two given projectivizations of the same affine knot. 


We will proceed in a more general fashion. We first introduce the notion of \textit{lensification} of a knot, which produces a non-local knot in any lens space $L(p,q)$ out of any given knot $K\subset\NS^3$. This notion naturally extends the notion of \textit{projectivization} of a knot, introduced by R. Mishra and V. Narayanan in \cite{Mishra2023}. We then describe how to adapt an idea by A. Hatcher to that more general setting so that we get an explicit isotopy between any two given lensifications in $L(p,q)$ of the same knot in diagrammatic terms. Then the case $\RR\PP^3\simeq L(2,1)$ just follows as a particular instance. As an application, in the last section we apply the algorithm to several examples from the literature \cite{Kauffman, Narayanan2025} for which the equivalence problem had been posed as a question and we depict explicit isotopies between them.

\textbf{Acknowledgements}: The second author would like to thank Ángel González-Prieto and Javier Martínez for useful discussions about knots in general $3$-manifolds as well as Enrique Aycart and Raquel Díaz for useful comments. The second author acknowledges support from PID2022-142024NB-I00 by MICINN (Spain).

 
\renewcommand{\thetheorem}{\thesection.\arabic{theorem}}

\section{Preliminaries}

In this section, we will recall some fundamental notions from the Theory of knots in lens spaces. We essentially follow the work \cite{manfredi2014knots} of E. Manfredi  and \cite{CATTABRIGA2013430} of A. Cattabriga, E. Manfredi and M. Mulazzani, from where we learnt about the topic.

\subsection{Knot theory in lens spaces}\label{subsect: modelo lentes}

Lens spaces $L(p,q)$ are compact, connected and orientable $3$-manifolds that can be defined in various ways. We will define these spaces through a geometric model described in \cite{manfredi2014knots} and \cite{CATTABRIGA2013430}. Henceforth, we will assume that $0\leq q <p$ are integers satisfying $\gcd(p,q)=1$.

    Consider  $B^3 :=\{x\in \RR^3 : \ |x|\leq 1\}$ the closed $3$-dimensional ball, or $3$-ball. We introduce the following terminology and notation following \cite{manfredi2014knots}: $E^+:=\{(x,y,z)\in \partial B^3 : z\geq 0\}$ and $E^-:=\{(x,y,z)\in \partial B^3 : z\leq 0\}$ denote the upper and lower hemispheres of $\partial B^3$, respectively. Sometimes, we will also call them northern and southern hemispheres, respectively. We write $N$ for the north pole of $B^3$ and $S$ for its south pole. Finally, we denote by $\mathring{B}=\{x\in\RR^3:|x|<1\}$ the interior of the $3$-ball $B^3$ and by $B_0^2$ the equatorial disk $B^3 \cap \{z = 0\}$ of the $3$-ball.

\begin{definition}[\textbf{Rotation map $g_{p,q}$}]
    We define the rotation map $g_{p,q}$ as the counterclockwise rotation of angle $\frac{2\pi q}{p}$ around the $z$ axis; i.e. for any point $x$ in the northern hemisphere $E^+$, we have:
    \begin{center}
    $\begin{array}{rccl}
	g_{p,q}: & E^+ & \longrightarrow & E^+ \\
	& x & \longmapsto &  A\cdot x,
	\end{array}$
    \end{center}
    where $$A = \begin{pmatrix}
    \cos \frac{2\pi q}{p} & -\sin \frac{2\pi q}{p} & 0 \\
    \sin \frac{2\pi q}{p} & \cos \frac{2\pi q}{p} & 0 \\
    0 & 0 & 1
    \end{pmatrix}$$ is the counterclockwise rotation matrix of angle $\frac{2\pi q}{p}$ around the $z$-axis.
    
\end{definition}

\begin{definition}[\textbf{Reflection map $f_z$}]
    We define the reflection map $f_z:E^+\to E^-$ as the map that maps any point $x$ in the northern hemisphere to its reflection with respect to the plane $\{z = 0\}$; i.e.
    \begin{center}
    $\begin{array}{rccl}
	f_z: & E^+ & \longrightarrow & E^- \\
	& (x,y,z) & \longmapsto & (x,y,-z).
	\end{array}$
    \end{center}
\end{definition}

\begin{figure}
    \centering
    \includegraphics[width=0.35\linewidth]{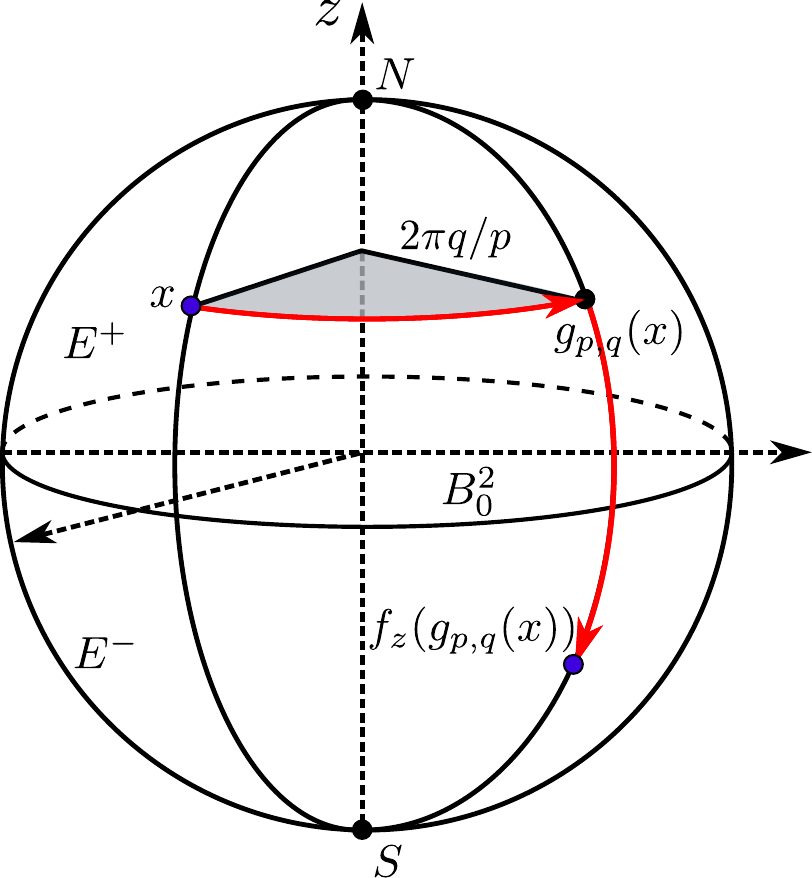}
    \caption{Visualization of the different elements in the $3$-ball model of $L(p,q)=B^3/\sim$. The northern and southern hemispheres are denoted by $E^+$ and $E^-$, respectively. The rotation map $g_{p,q}$ is depicted acting on the point $x$ and the reflection map $f_z$ is depicted acting on the point $g_{p,q}(x)$. Thus, the points depicted in blue $x\sim f_z\left(g_{p,q}(x)\right)$ are identified with each other.}
    \label{fig:espacio_lentes_3-bola}
\end{figure}

\begin{definition}\label{def:lensspace}   Let $p,q$ be two integers such that $\gcd(p,q) = 1$ and $0\leq q <p$. The \textbf{lens space}  $L(p,q)$ is defined as the quotient of the $3$-ball $B^3$ under the equivalence relation that identifies any point $x\in E^+$ with its image in $E^{-}$ by the map $f_z\circ g_{p,q}$; i.e. $x\sim f_z(g_{p,q}(x))$.
$$
    L(p,q) = B^3/_\sim.
    $$
    
We will denote by $F$ the quotient map $F: B^3 \to L(p,q)=B^3/\sim$. 
\end{definition}

\begin{remark} Note that the each equivalence class $|x|_\sim$ may contain a different number of points depending on where $x$ lies:
    \begin{itemize}
        \item[i)] If $x \in \mathring{B}^3$, then $|[x]_\sim| = 1$.
        \item[ii)] If $x \in \partial B^3 \setminus \partial B_0^2$, then $|[x]_\sim| = 2$.
        \item[iii)] If $x \in \partial B_0^2$, then $|[x]_\sim| = p$.
    \end{itemize}
\end{remark}

\begin{example}
The lens space $L(1,0)$ is equivalent to the $3$-sphere since there is no rotation taking place when defining the quotient. On the other hand, the lens space $L(2,1)$ is equivalent to the projective space $\RR\PP^3$. This is clear from its definition.
\end{example}
\subsubsection{Generator of $\pi_1(L(p,q))$ and $H_1(L(p,q))$ in the $3$-ball model.}\label{generatorH1}

Finally, let us describe the fundamental group of a lens space $L(p,q)$ in this $3$-ball model. Recall that $\pi_1(L(p,q))=\ZZ_p$ (we omit the basepoint from the notation since $L(p,q)$ is path-connected).

The generator of this group, whose class we denote by $[f]$, can be described geometrically as follows (see \cite{manfredi2014knots, CATTABRIGA2013430}). Any (counterclockwise oriented) curve $f$ given by a $p$-th fraction of the equatorial curve yields, at the homotopy level, the class $[f]$ which generates the fundamental group $\pi_1(L(p,q))$ (Figure \ref{fig: generatorH1}). Indeed, note that it represents a closed loop since its endpoints are identified points in $\partial B_0^2\subset B^3/\sim$. And, moreover, it has order $p$ (note that its $p$-times concatenation yields the whole equatorial curve, which is a contractible curve).

Since $L(p,q)$ is path-connected and has an abelian fundamental group, then the same generator $f$ yields the generator in homology; i.e. $[f]\in H_1(L(p,q))=\ZZ_p$ is the generator.

\begin{figure}[h!]
    \centering
    \includegraphics[width=0.3\linewidth]{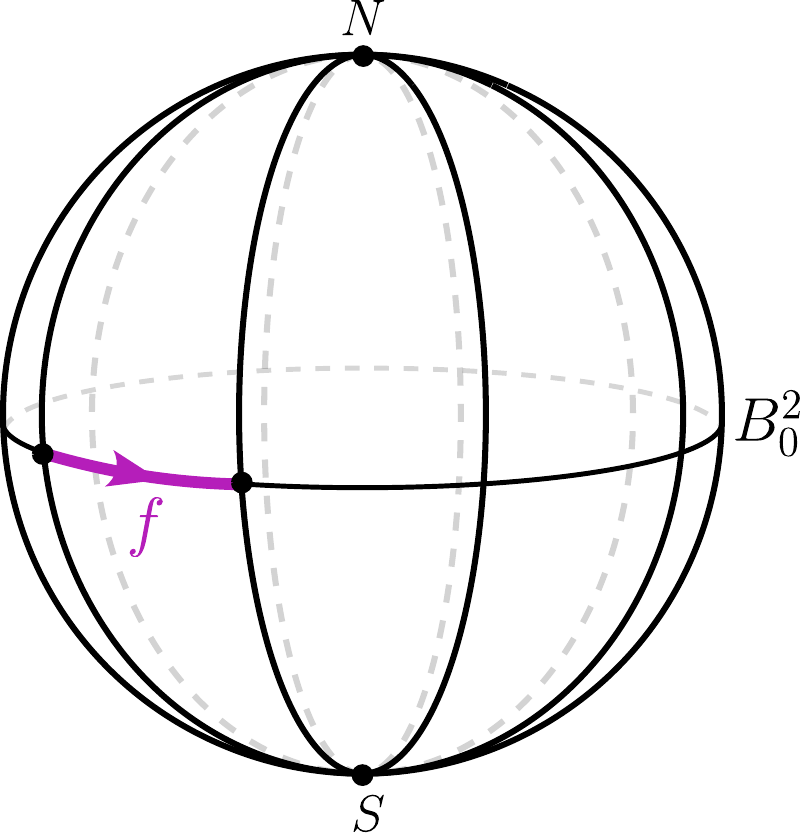}
    \caption{Depiction of the curve $f$ that yields the generator $[f]$ of $\pi_1(L(p,q))=\ZZ_p$ and also of $H_1(L(p,q))=\ZZ_p$. It is defined as a $p$-th fraction of the counterclockwise oriented equatorial curve. It depicts an actual closed loop since its endpoints are identified and it has order-$p$ since its $p$-times concatenation yields the whole equatorial curve that is contractible within the $3$-ball $B^3/\sim$.}
    \label{fig: generatorH1}
\end{figure}

\subsection{Disk diagrams}\label{sec:diskdiagram}

We will introduce the notion of a disk diagram (which we learnt from \cite{CATTABRIGA2013430} and \cite{manfredi2014knots}) of a knot or link in $L(p,q)$. This allows to study knots and links in lens spaces from a diagrammatic viewpoint. We follow the exposition from \cite{CATTABRIGA2013430} and \cite{manfredi2014knots}.

 Consider a link $L\subset L(p,q)$ and denote by $L'$ its preimage by the quotient map $F: B^3 \longrightarrow L(p,q)$; i.e.  $L' = F^{-1}(L)$. Henceforth, for the sake of clarity and whenever it is clear from the context, we will refer interchangeably as $L$ to both $L$ and $L'$.

\begin{proposition} \cite[p. 431]{CATTABRIGA2013430}. Every link $L\subset L(p,q)=B^3/\sim$ can be isotoped to another link $\tilde{L}$ such that:
    \begin{enumerate}
        \item[i)] $\tilde{L}$ does not pass through the poles $N,S$ of $B^3$ nor through the equator $\partial B_0^2$; i.e.
        \[
        \tilde{L}\cap \left(\{N,S\}\cup\partial B_0^2\right)=\emptyset.
        \]
        \item[ii)] The intersection of $\tilde{L}$ with $\partial B^3$ occurs transversely and in a finite (possibly empty) set of points.
    \end{enumerate}
\end{proposition}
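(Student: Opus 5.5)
The plan is a general position argument carried out in $L(p,q)$ itself, translated back to the $3$-ball model through the quotient map $F$. The key observation is that the image $F(\partial B^3)$ is a $2$-dimensional CW-complex $\Sigma\subset L(p,q)$, and its singular part is small. The equator $\partial B_0^2$ maps to a single circle $C=F(\partial B_0^2)$. The poles $N,S$ are identified with each other, since $g_{p,q}$ fixes $N$ and $f_z(N)=S$, so they give a single point $P=F(N)=F(S)$. Away from $C$, the set $\Sigma\setminus C$ is a smoothly (indeed tamely) embedded open surface: it is the image of the open northern hemisphere, which $F$ identifies bijectively with the open southern one. Working in the smooth (or PL) category, I assume $L$ is a smooth, or tame, link; wild links are excluded, as is implicit in the statement.

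For condition i), note that $C\cup\{P\}$ is a $1$-dimensional submanifold of the $3$-manifold $L(p,q)$, and $L$ is $1$-dimensional. Since $1+1<3$, a small ambient isotopy moves $L$ off $C\cup\{P\}$; this is the standard transversality theorem. Concretely, a $C^\infty$-small perturbation of the embedding makes it transverse to $C$ and to $P$, and transversality in this dimension means the intersection is empty. A sufficiently small perturbation of an embedding is still an embedding, and it is ambient isotopic to the original by the isotopy extension theorem. After this step, $L$ lies at positive distance $\varepsilon$ from $C\cup\{P\}$.

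For condition ii), I work in the open set $U=L(p,q)\setminus (C\cup\{P\})$. There $\Sigma\cap U$ is a properly embedded surface, $L\subset U$, and $L$ is compact. The transversality theorem, applied to the map $L\to U$ relative to the codimension-one submanifold $\Sigma\cap U$, gives a perturbation supported in $U$, and small enough not to approach $C\cup\{P\}$, after which $L\pitchfork\Sigma$. Transverse intersection of a compact $1$-manifold with a codimension-$1$ submanifold is a compact $0$-manifold, hence a finite set. Pulling back by $F$, each point of $L\cap\Sigma$ corresponds to a pair of identified points $x\in E^+$ and $f_z(g_{p,q}(x))\in E^-$. Near these points $F$ restricted to $B^3$ is a local diffeomorphism from a half-ball to one side of $\Sigma$, so the preimage $\tilde{L}'$ meets $\partial B^3$ transversely at finitely many points.

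The main obstacle is the non-manifold locus $C$, where $p$ sheets of $\Sigma$ meet and the local structure is not that of an embedded surface. The transversality argument cannot be applied directly there, and this is why condition i) must be achieved first, using only the dimension count. Once that is done, the second perturbation must be kept small enough to stay in a compact subset of $U$, which the $\varepsilon$-neighbourhood guarantees. A further minor point is smoothness of $\Sigma$ across the two glued hemispheres. Here I would use the standard smooth structure on $L(p,q)$, in which $F$ restricted to the open hemispheres is a smooth embedding. If that is inconvenient, I would instead work in the PL category, with $L$ polygonal in $B^3$, and achieve general position by moving vertices slightly.
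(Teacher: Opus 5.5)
The paper gives no argument for this proposition beyond the citation to Cattabriga--Manfredi--Mulazzani, where it is treated as a routine general-position fact. Your proof is correct and fills that in in the standard way: you first push $L$ off the at-most-one-dimensional set $F(\partial B_0^2\cup\{N,S\})$ by a dimension count, and then make $L$ transverse to the smooth part of $F(\partial B^3)$. The one ingredient you take on faith does exist: a smooth structure on $B^3/\sim$ in which $F(\partial B_0^2)$ is a smooth circle and $F$ maps half-ball neighbourhoods of open-hemisphere points diffeomorphically onto the two sides of $F(\partial B^3)$. To see this, identify $B^3$ with the lens-shaped fundamental domain $\{(z_1,z_2)\in\NS^3 : z_1=0 \text{ or } |\arg z_1|\le \pi/p\}$ of the free $\ZZ_p$-action on $\NS^3\subset\mathbb{C}^2$, which sends $\partial B_0^2$ to the great circle $\{z_1=0\}$.
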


\begin{definition}\label{obs: proyeccion_lenspace}
    Consider $x\in B^3$ and let $c_x$ be the circle (or line) that passes through $N, S$ and  $x$. We define the  \textbf{projection onto the equatorial disk} as the projection map
    
    \begin{center}
    $\begin{array}{rccl}
	p: & B^3 \setminus\{N,S\}& \longrightarrow & B_0^2 \\
	& x & \longmapsto & c_x\cap B_0^2.
	\end{array}$
    \end{center}
We will henceforth abuse notation and write $p$ as well for the map $p: \left(B^3 \setminus\{N,S\}\right)/\sim \,\to  B_0^2/\sim$ defined in the obvious manner in the quotient. By restricting the domain to the link $L$, we call the map
    $$
    p \big|_L : L \to B_0^2/\sim,
    $$
     the \textbf{projection of $L$ onto the equatorial disk}.
\end{definition}

    \begin{definition}
        Take a point $P$ in the image of the projection of a link $L$ onto the equatorial disk $p|_L$. We say that $P$ is a \textbf{multiple point} if the preimage of $P$ by $p|_L$, $\left(p|_L\right)^{-1}(P)$, contains more than one point. We say that $P$ is a \textbf{double point} if  $\left(p|_L\right)^{-1}(P)$ consists of exactly two points.  
    \end{definition}

\begin{definition}[\cite{CATTABRIGA2013430, manfredi2014knots}]\label{def:proyeccion_reg_lenspace}
    We say that the projection of a link $L\subset L(p,q)$,  $p \big|_L : L \to B_0^2$, is regular if 
    \begin{itemize}
        \item[i)] The projected curve $p|_L(L)$ does not contain cusps.
        
        \item[ii)] There are only a finite number of multiple points and they are all double points not lying in $\partial B_0^2$. Moreover, all self-intersections of $p|_L(L)$ at those points occur transversely.
    \end{itemize}
\end{definition}

We can define overpasses and underpasses for knots in lens spaces analogously to the classical case of $\RR^3$ or $\mathbb{S}^3$. We follow the reference \cite{CATTABRIGA2013430}.

\begin{definition}\cite[p. 432]{CATTABRIGA2013430}
Let $Q\in B_0^2$ be a double point of $K$ and consider its two preimages $\{q_1,q_2\}=p|_L^{-1}(Q)$. Assume that $q_1$ is closer to the north pole $N$ than $q_2$ and let $\alpha$ be an open subset within $L$ so that $p|_L(\bar{\alpha})$ does not meet $\partial B_0^2$ nor any other double point (here $\bar{\alpha}$ denotes the topological closure of $\alpha$). Then we say that $\alpha$ is an \textbf{underpass} relative to $Q$. We will use the term underpass to refer interchangeably to their projections, respectively.
\end{definition}

\begin{definition}\cite[p. 432]{CATTABRIGA2013430}
    Choose an underpass for each double point of $L$. Then we call \textbf{overpass} to each connected component of the complement of such underpasses within $L$. Analogously, we will use the term overpass to refer interchangeably to their projections.
\end{definition}    
    
\begin{definition}\cite[p. 432]{CATTABRIGA2013430}\label{def: disk_diagram} We call a \textbf{disk diagram} of a knot or link $L\subset L(p,q)=B^3/\sim$ to a regular projection of $L$ onto the equatorial disk $B_0^2$, where the corresponding overpasses and underpasses have been specified.
\end{definition}

\begin{remark}\label{obs: notacion_disk_diagram_lenspace}
    Additionally, for the sake of clarity, disk diagrams (Definition \ref{def: disk_diagram}) can be decorated with some extra information \cite{CATTABRIGA2013430}. Precisely, we can label the endpoints of the overpasses in the diagram by the following rule:
    \begin{enumerate}
        \item[i)] Following the counterclockwise orientation of the equator $\partial B_0^2$, label with  
        $+1, \cdots, +t$ the endpoints of the overpasses that lie in the upper hemisphere (see Figure \ref{fig: numeracion_disk_diagram}).
        \item[ii)] Now label with $-1,\cdots, -t$ the endpoints of the overpasses that lie in the lower hemisphere so that the point $-i$ is identified with the point $+i$ for every $i=1,\cdots, t$ (see Figure \ref{fig: numeracion_disk_diagram}).
    \end{enumerate}
    \end{remark}

\begin{figure}[h!]
    \centering
    \includegraphics[width=0.7\linewidth]{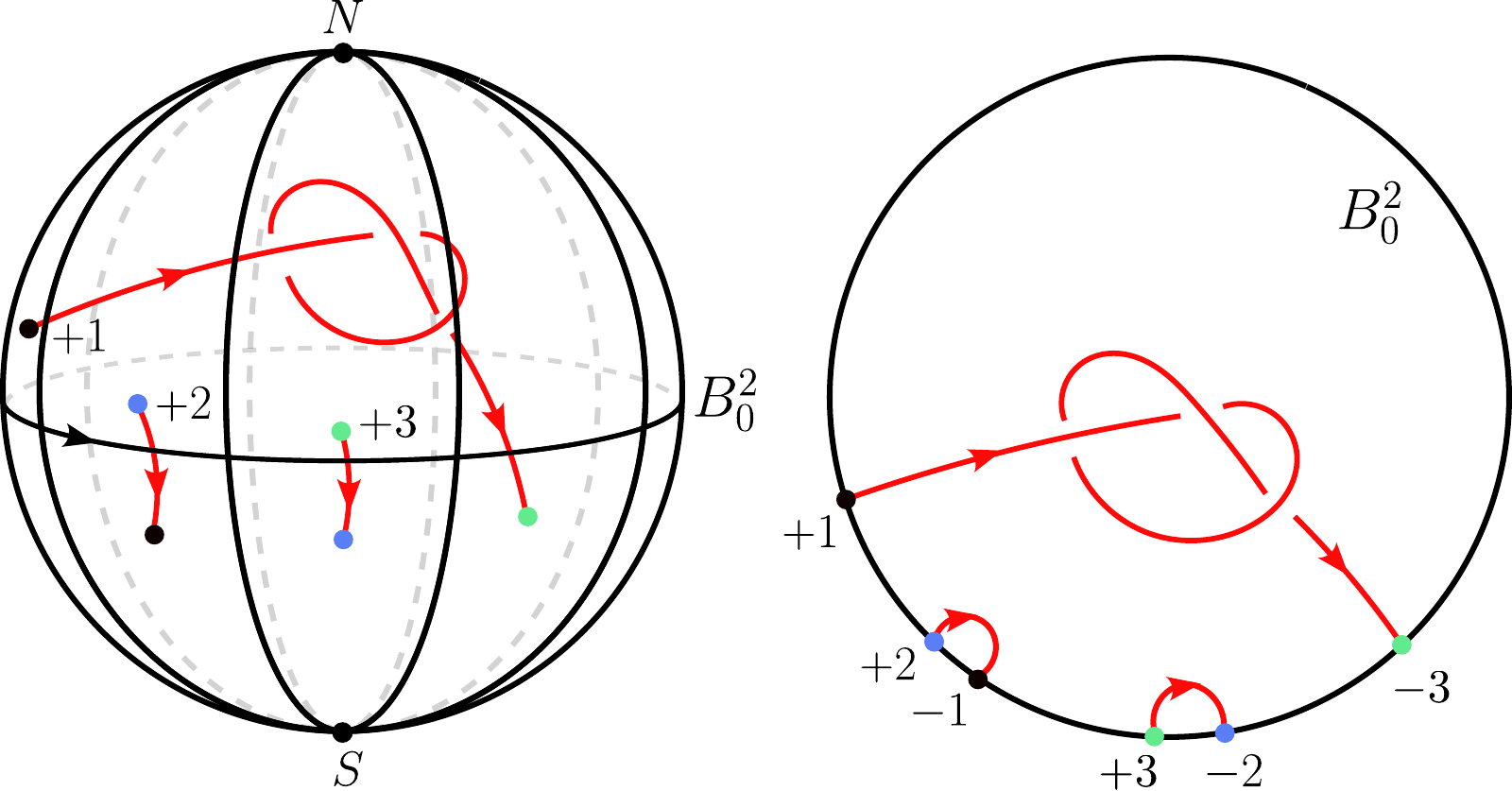}
    \caption{Depiction of a knot in a lens space $L(8,1)=B^3/\sim$ together with its associated disk diagram (Definition \ref{def: disk_diagram}). Points coming from the northern hemisphere are labelled with positive natural numbers, in order, following the counterclockwise orientation of the equator $\partial B_0^2$. Points coming from the southern hemisphere are labelled with negative numbers so that each point labelled with $-i$ is identified with its corresponding pair $+i$ and viceversa.}
    \label{fig: numeracion_disk_diagram}
\end{figure}

\section{Lensification of knots}\label{lensificado}

In this section we will introduce the notion of \textit{lensification} of a knot $K\subset \mathbb{S}^3$. This is an operation that takes a knot $K\subset \mathbb{S}^3$ as an input and produces a non-local knot $K_{\mathcal{L}ens}$ in $L(p,q)$. We will recap some elementary notions from the theory of \textit{long knots} \cite{Budney2} prior to introducing this notion.

\subsection{Long knots in $\RR^3$}

\begin{definition}\label{long}
A \textbf{long embedding} $\gamma:\RR\to\RR^3$ is an embedding satisfying $\gamma(t)=(t, 0, 0)$ for $|t|\geq1$ and $\gamma(\RR)\cap\BB^3=\gamma([-1,1])$.
\end{definition}

We will often refer to the usual embeddings $\gamma:\NS^1\to\NS^3$ as \textbf{closed embeddings} in order to make the distinction from long embeddings explicit.

\begin{remark}
 In the strict sense, a long/closed knot is an oriented submanifold that can be parametrized by a long/closed embedding. Nonetheless, we will often abuse terminology and use the term knot to refer to embeddings as well whenever it is clear from the context.
\end{remark}

\begin{definition}
    We call the \textbf{standard part} of a long embedding to its restriction $\gamma(t)|_A$ to the subset $A=\{t\in\RR: |t|\geq 1\}$. Analogously, we say that the sub-arc $\gamma|_{[-1,1]}(t)$ is the \textbf{non-trivial part} of the long embedding. 
\end{definition}

Henceforth, we will write $\mathcal{O}p(X)$ to denote an arbitrarily small but non explicitly
specified open neighborhood of a set $X$.

\begin{remark}\label{LongKnotBall}
We can assume that, up to homotopy, a long embedding satisfies $\gamma(t)=(t,0,0)$ for $t\in\mathcal{O}p(\{-1,1\})$. Conversely, any embedding $\gamma:[-1,1]\to\RR^3$ that satisfies:
    \begin{itemize}
        \item[i)] $\gamma(-1,1)\subset\BB^3$ and
        \item[ii)] $\gamma(t)=(t,0,0)$ for $t\in\mathcal{O}p(\{-1,1\})$
    \end{itemize}
    
    is clearly the non-trivial part of a long embedding since it can be completed to an actual long embedding by defining $\gamma(t)=(t,0,0)$ for $|t|\geq 1$. Thus, from now on, we will use the terms long embedding/knot to refer to any embedding or embedded arc $\gamma:[-1,1]\to\RR^3$ satisfying $i)$ and $ii)$.
    \end{remark}

\begin{remark}\label{longtoclosed} It is a well known fact that isotopy classes of embeddings/knots in $\NS^3$ are in one-to-one correspondence with isotopy classes of long embeddings/knots in $\RR^3$.  This identification can be geometrically understood as follows. Taking the one-point compactification of  $\RR^3$ into $\NS^3$ allows to ``close'' a long knot into a knot in $\NS^3$. We can describe this process diagrammatically as ``taking the canonical closure'' (see below). Likewise, we can revert the process by taking the stereographic projection from $\NS^3$ to $\RR^3$. 
\end{remark}

For a rigorous proof of the statement in Remark \ref{longtoclosed}, we refer the reader to \cite[Thm 2.1]{Budney2}, where R. Budney provides an explicit decomposition of the space of knots in $\NS^3$ in terms of the space of long knots in $\RR^3$. This is much more general than what we will use in this work but in particular this decomposition implies the aforementioned correspondence.

Given a long knot $K$ in $\RR^3$, its canonically associated (closed) knot in $\NS^3$ can be described from its diagram. Indeed, working on a diagram, we can just join the endpoints $\gamma(-1)$, $\gamma(1)$ of the non-trivial part of the long knot by a smooth arc disjoint from the interior of $B^3$, whose projection onto the diagram does not yield self-intersections. We call this process, which is clearly well defined up to homotopy, ``taking the \textbf{canonical closure} of the long knot $K$''.

\begin{remark} Note that every isotopy of long knots readily defines, in a canonical way up to homotopy, an isotopy of knots in $\mathbb{S}^3$ by parametrically considering the canonical closure. Indeed, an isotopy of long knots works relative to the boundary of the $3$-ball $B^3$ and thus the whole isotopy lifts to an isotopy of (closed) knots in $\mathbb{S}^3$.
\end{remark}

If we regard $\NS^3\subset\mathbb{C}^2$, we can actually be more specific and also impose that, when taking the canonical closure, the basepoint of the knot $\gamma(0)$ must coincide with the north pole of $\NS^3$; i.e. $\gamma(0)=(1,0,0,0)\in\NS^3$ and, moreover, that its derivative must satisfy $\gamma'(0)=(0,1,0,0)$.  

 The space of long knots is homotopy equivalent to the space of closed knots with fixed basepoint and derivative at the basepoint; i.e. to the space
\begin{equation}\label{Emb*}
\mathfrak{Emb}_{*}=\{\gamma\in\mathfrak{Emb}(\NS^1,\NS^3): \gamma(0)=(1,0,0,0), \gamma'(0)=(0,1,0,0)\}.
\end{equation}
\begin{remark}\label{Remark:Emb*}
The fact that the space of long knots can be understood, up to homotopy, as the space $\mathfrak{Emb}_{*}$ of knots in $\NS^3$ with fixed basepoint and derivative at the basepoint is a standard result. A rigorous proof can be derived, e.g. from \cite[Prop 1.4]{BC}. 
\end{remark}

  \begin{figure}[h]
            \centering
            \includegraphics[width=0.7\linewidth]{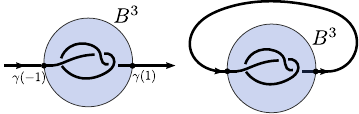}
            \caption{The Figure describes the process of taking the canonical closure (right) of a long knot (left) by attaching a smooth arc disjoint from the interior of $B^3$ to the non-trivial part of the long knot, whose projection onto the diagram does not yield self-intersections. This is well defined up to homotopy and canonically yields a knot in $\mathbb{S}^3$ out of any long knot in $\RR^3$.}
            \label{fig:canonicalclosure}
        \end{figure}

\begin{remark}\label{rem:complementball}
 Let us provide another interpretation of this discussion. Given an embedding $\gamma:\NS^1\to\NS^3$, choose a point $p$ in the trace of $\gamma$ and an arbitrarily small ball $B$ containing $p$ in its interior and such that $\partial B^3$ intersects the trace of $\gamma$ exactly in two antipodal points (Figure \ref{fig:BallLongKnot2}). Then the part of $\gamma$ outside $B$ can be understood as a long embedding $\gamma_{Long}(t):\RR\to\RR^3$. Indeed, note that $\NS^3\setminus B$ is a topological $3$-ball where $B$ can be understood as a small neighborhood of the point $\infty\in\NS^3$ from which we stereographically project in order to obtain the long knot. Therefore, $\NS^3\setminus B$ contains the complementary part of the knot (i.e. everything but the part contained in $B$) so that, up to homotopy, yields an embedding as in Remark \ref{LongKnotBall}. From this perspective, taking the canonical closure can be reinterpreted (up to homotopy) as attaching to the long knot the small arc that lies within $B$.

 Furthermore, given a diagram of the knot in a plane $\Pi\subset\RR^3\subset\NS^3$, there is a natural way of passing from the closed knot (together with the choice of small ball $B$) to the associated long knot. Indeed, up to a rigid movement we can identify $\Pi\simeq \RR^2$ and just consider the knot very close to the long axis $\beta(t)=(t,0,0)$. Then, by removing a tiny arc around the point $p$ of $\gamma$ and removing a tiny arc around the origin in the long axis $\beta(t)$, we can just join the remaining components from both curves without creating new self-intersection points, yielding the long embedding associated to $\gamma$ (Figure \ref{fig:BallLongKnot2}). This long embedding is clearly the one associated to $\gamma$ since the removed segment can be just regarded, up to homotopy, as its canonical closure.
\end{remark}

  \begin{figure}[h]
            \centering
            \includegraphics[width=0.65\linewidth]{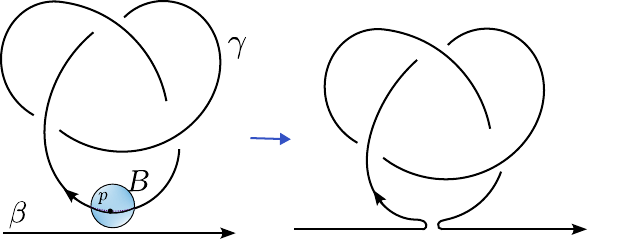}
            \caption{The figure on the left shows a diagram of a closed embedding $\gamma:\NS^1\to\NS^3$ together with a choice of a point $p$ on its trace. $B$ is an arbitrarily small ball  containing $p$ on its interior such that $\partial B$ intersects the trace of $\gamma$ in exactly two antipodal points. The part of $\gamma$ outside $B$ is a long embedding $\gamma_{Long}(t):\RR\to\RR^3$ according to Remark \ref{rem:complementball}. The figure on the right shows how to obtain a diagrammatic representation of $\gamma_{Long}(t)$ just by removing the segment around $p$ and connecting it to the long axis $\beta(t)=(t,0,0)$.}
            \label{fig:BallLongKnot2}
        \end{figure}

As a consequence of the discussion above and, in particular, from the correspondence between isotopy classes of knots and long knots, we get the following result.

\begin{proposition}\label{IndependanceLongKnot}
Let $\gamma_1:\RR\to\RR^3$ and $\gamma_2:\RR\to\RR^3$ be two long knots so that their canonical closures $\tilde\gamma_1:\NS^1\to\NS^3$ and $\tilde\gamma_2:\NS^1\to\NS^3$ are isotopic as knots in $\NS^3$. Then, $\gamma_1$ and $\gamma_2$ are isotopic as long knots. 
\end{proposition}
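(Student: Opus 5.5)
This follows from the correspondence in Remark \ref{longtoclosed}, together with Remark \ref{Remark:Emb*} and Budney's decomposition \cite[Thm 2.1]{Budney2}. The plan is to show that the canonical closure map induces an injection on isotopy classes. Closure sends a long knot $\gamma$ to a knot in $\NS^3$ passing through $\infty$ in a standard way. Up to homotopy, it identifies the space of long knots with the space $\mathfrak{Emb}_{*}$ of (\ref{Emb*}), whose elements are closed knots with fixed basepoint and fixed tangent vector there. Hence it suffices to show that two elements of $\mathfrak{Emb}_{*}$ which are isotopic in $\mathfrak{Emb}(\NS^1,\NS^3)$ are already isotopic inside $\mathfrak{Emb}_{*}$. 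Equivalently, the map $\pi_0(\mathfrak{Emb}_{*})\to\pi_0(\mathfrak{Emb}(\NS^1,\NS^3))$ should be injective.

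To prove this, I would consider the evaluation map
\[
ev:\mathfrak{Emb}(\NS^1,\NS^3)\to U\NS^3, \qquad \gamma\mapsto(\gamma(0),\gamma'(0)/|\gamma'(0)|),
\]
which lands in the unit tangent bundle $U\NS^3\cong \NS^3\times\NS^2$. Its fibre is, up to homotopy (normalising the speed at the basepoint), exactly $\mathfrak{Emb}_{*}$. The map $ev$ is a fibration; this follows from the usual isotopy-extension / Cerf–Palais argument, since $\SO(4)$ acts transitively on $U\NS^3$ and the action provides local sections.

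The long exact sequence of the fibration then gives an exact sequence
\[
\pi_1(U\NS^3)\to\pi_0(\mathfrak{Emb}_{*})\to\pi_0(\mathfrak{Emb}(\NS^1,\NS^3)),
\]
and $\pi_1(\NS^3\times\NS^2)=0$. Exactness at the level of $\pi_0$ has to be read as a statement about orbits. The $\pi_1$ of the base acts on the fibre's $\pi_0$, and two points of $\pi_0$ of the fibre have the same image in the total space if and only if they lie in the same orbit of this action. Since the base is simply connected, the action is trivial and the map on $\pi_0$ is injective.

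Concretely, let $\Gamma_s$ be an isotopy of closed knots from $\tilde\gamma_1$ to $\tilde\gamma_2$. Its image $ev(\Gamma_s)$ is a loop in $U\NS^3$ based at the standard frame. We null-homotope this loop and lift the null-homotopy using the homotopy lifting property. This yields a deformation of $\Gamma_s$, rel endpoints, into an isotopy lying entirely in $\mathfrak{Emb}_{*}$. Restricting to the complement of a small ball around the basepoint then turns it into an isotopy of long knots, as in Remark \ref{rem:complementball}.

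The main obstacle is justifying that $ev$ is a (Serre) fibration, and that the fibre really is the space of long knots up to homotopy. For the latter I would simply invoke Remark \ref{Remark:Emb*} (via \cite[Prop 1.4]{BC}). For the fibration property I would use the $\SO(4)$-action: given a path in $U\NS^3$, lift it to a path $A_s$ in $\SO(4)$ and move the knot by $A_s$. This already lifts paths, and the same construction with a local section of $\SO(4)\to U\NS^3$ lifts homotopies.
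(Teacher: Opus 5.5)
Your argument is correct, but it follows a different route from the paper. The paper's proof is a one-line citation of Budney's decomposition \cite[Thm 2.1]{Budney2}. That result describes the full homotopy type of $\mathfrak{Emb}(\NS^1,\NS^3)$ in terms of the long-knot space (essentially as $\SO(4)\times_{\SO(2)}$ of it), and the bijection on isotopy classes is read off from it. You instead prove only the $\pi_0$-injectivity that is actually needed, using the $\SO(4)$-equivariant evaluation map to $U\NS^3\cong\SO(4)/\SO(2)$. Your only external input is the equivalence of Remark \ref{Remark:Emb*} between long knots and $\mathfrak{Emb}_*$. This is the $\pi_0$-shadow of the mechanism behind Budney's theorem. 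The citation gives brevity and a much stronger statement. Your version shows exactly why nothing is lost when closing up a long knot: the space of unit tangent vectors of $\NS^3$ is simply connected.

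Two details are worth tightening.

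First, you can avoid lifting a null-homotopy altogether. Lift $ev(\Gamma_s)$ to a path $A_s$ in $\SO(4)$ with $A_0=\mathrm{id}$. Then $A_1$ lies in the stabilizer $\SO(2)$ of the standard frame, and $A_s^{-1}\Gamma_s$ runs inside the fibre from $\tilde\gamma_1$ to $A_1^{-1}\tilde\gamma_2$. A path from $A_1^{-1}$ to $\mathrm{id}$ inside the connected group $\SO(2)$, which preserves the fibre, finishes the argument. This uses only path lifting in the principal bundle $\SO(4)\to U\NS^3$, so the fibration question you flag disappears.

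Second, your final step, restricting to the complement of a small ball, is not literally valid for an arbitrary path in $\mathfrak{Emb}_*$. A fixed $1$-jet at the basepoint does not make the knots standard on a fixed ball throughout the isotopy. You need a uniform straightening near the basepoint, using compactness of the parameter interval. You also need to normalise the speed at $0$, since the fibre of $ev$ allows $\gamma'(0)$ to be any positive multiple of $(0,1,0,0)$. Both are exactly what the homotopy equivalence of Remark \ref{Remark:Emb*} supplies. So conclude via the $\pi_0$-bijection coming from that equivalence rather than by restriction.
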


\begin{proof}
    The result is a direct consequence of the correspondence between isotopy classes of knots and long knots. See \cite[Thm 2.1]{Budney2}.
\end{proof}

\subsection{On non-local knots in $L(p,q)$}

This subsection contains a brief discussion about non-local knots in $L(p,q)$. 

\begin{definition}
    We say that a \textbf{local knot} in a $3$-manifold $M$ is a knot that lies within an embedded $3$-ball in $M$. On the contrary, knots that do not lie within any embedded $3$-ball in $M$ are called \textbf{non-local knots}.
\end{definition}

\begin{remark}
    If $M=\RR^3$ or $M=\NS^3$ then every knot in $M$ is local. Nonetheless, this is not necessarily the case for more general manifolds $M$. Local knots are called \textbf{contractible knots} in \cite{viro} and, moreover, local knots in the particular case of $\RR\PP^3$ are also called \textbf{affine knots} (\cite{Drobotukhina1990}, \cite{Mishra2023}, \cite{Kauffman}).
\end{remark}
Moreover, in the case of lens spaces $L(p,q)$ we can distinguish several types of non-local knots depending on the class that they represent in homology, $H_1(L(p,q))=\ZZ_p$.

\begin{definition}
  We say that a knot $K\subset L(p,q)$ is of class-$n$ (where $n\in\{0,1,\cdots, p-1\}$) if it represents the class $$[n]\in H_1(L(p,q))=\ZZ_p=\{[0],[1],\cdots, [p-1]\},$$
and where the generator of $H_1(L(p,q))$ in the $3$-ball model (Figure \ref{fig: generatorH1}) represents the class $[1]\in H_1(L(p,q))$. \end{definition}

\begin{remark}
    Every local knot is of class-$0$ but the converse does not necessarily hold; i.e. there exist nullhomologous non-local knots (see e.g. \cite[Fig. 4]{Narayanan2025}). On the other hand, every knot of class-$n$ where $n\neq 0$ is necessarily non-local.
\end{remark}

\subsubsection{A canonically defined non-local unknot in $L(p,q)$}
Let us describe a particular instance of a non-local knot in every lens space $L(p,q)$. This knot can be easily described in the $3$-ball model of $L(p,q)$ and, moreover, will be key in further constructions.

\begin{definition}
    Take a point $\mathfrak{p}\in\partial B^3/\sim$ that does not lie in the equatorial disk and consider a segment that connects this point with its identified point $f_z\circ g_{p,q}(\mathfrak{p})$ (Definition \ref{def:lensspace}) (see Figure \ref{fig:nonlocalunknot}). 
Note that this arc yields (up to isotopy) an embedding in $L(p,q)$, since it represents a closed curve. This knot is called the \textbf{class-$q$ non-local unknot}.
    
\end{definition}
 Clearly, this definition does not depend on the choice of point $\mathfrak{p}$ (since $\partial B^3$ is connected) and it is thus well defined up to isotopy. Moreover, this knot represents  the class $[f^q]$ in $\pi_1(L(p,q))=\ZZ_p$, where recall that $f$ denotes the generator of $\pi_1(L(p,q))$ given by taking a $p$-th fraction of the equatorial curve in $B^3/\sim$ (Subsection \ref{generatorH1}). This knot is of class-$q$. This readily follows from its definition (see Figure \ref{fig:nonlocalunknot} for the visualization of why it represents the class $[f^q]$ in $\pi_1(L(p,q))$) or check \cite[Lemma 4.3]{manfredi2014knots} from where the same fact readily follows.

\begin{figure}[h!]
    \centering
    \includegraphics[width=0.7\linewidth]{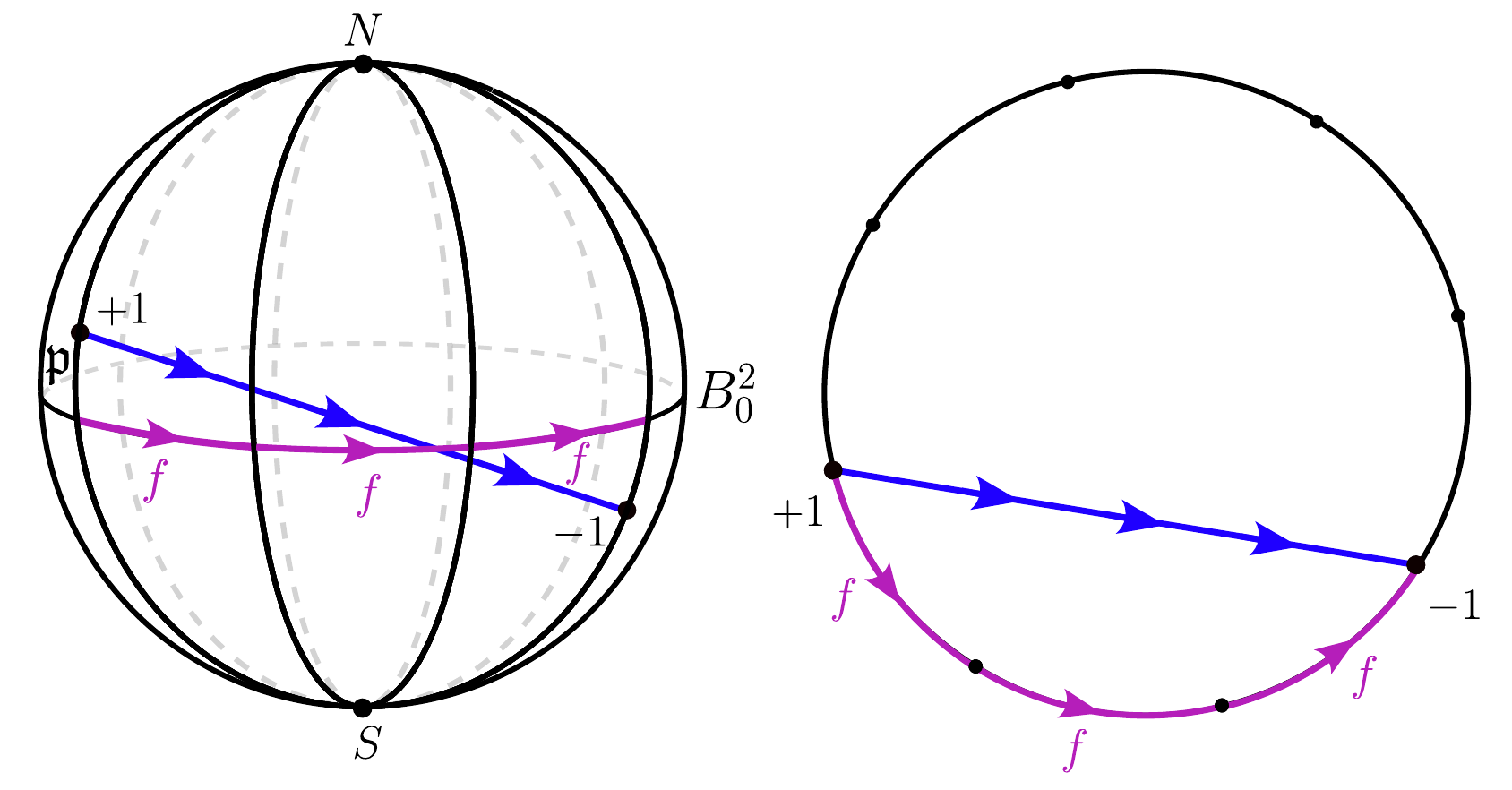}
    \caption{Depiction of the class-$q$ non-local unknot (left) in a lens space $L(p,q)=B^3/\sim$ for the particular choice of $(p,q)=(8,3)$ together with its associated disk diagram (on the right). This knot is defined by joining two identified points $\mathfrak{p}$ and  $f_z\circ g_{p,q}(\mathfrak{p})$ in $\partial B^3\setminus\partial B_0^2$ via a segment and it represents the homotopy class $[f^q]\in\pi_1(L(p,q))$. Since $\gcd(p,q)=1$, it turns out that its homotopy class generates both the whole fundamental group $\pi_1(L(p,q))=\ZZ_p$ and also the homology group $H_1(L(p,q))=\ZZ_p$.}
    \label{fig:nonlocalunknot}
\end{figure}

\subsection{From long knots in $\RR^3$ to lensifications in $L(p,q)$}\label{subs: fromlongtolens}

In this subsection we will discuss how starting from a given long knot in $\RR^3$ we can construct, in a natural way, an associated non-local knot in any lens space $L(p,q)$.

First, observe that for $u\in(0,1)$ the diffeomorphism $\phi_u(x,y,z)=(u\cdot x, u\cdot y, u\cdot z)$ shrinks the $3$-ball $B^3$ onto a smaller ball $B_u=\{x\in\RR^3: |x|\leq u\}$ of radius $u$. 

    Let $\gamma_{\mathcal{L}ong}:[-1,1]\to\RR^3$ be a long embedding (Remark \ref{longtoclosed}), where therefore $\gamma_{\mathcal{L}ong}(-1)=(-1,0,0)$ and $\gamma_{\mathcal{L}ong}(1)=(1,0,0)$. 

    \begin{definition}\label{shrinking}
        Let $u\in(0,1)$. We call the \textbf{$u$-shrinking} of $\gamma_{\mathcal{L}ong}$ to the embedding $\gamma_{\mathcal{L}ong}^u:[-1,1]\to\RR^3$ defined as: \[\gamma_{\mathcal{L}ong}^u:= \phi_u\circ\gamma_{\mathcal{L}ong}.\]

    \end{definition}

\subsubsection{Definition of the lensification of a knot}

Start with the class-$q$ non-local unknot parametrized as $\mathfrak{u}(t):\NS^1\to\ B^3/\sim$ with $\mathfrak{u}(-1)\sim\mathfrak{u}(1)\in\partial B^3$ (where we regard $\NS^1=[-1,1]_{-1\sim 1}$).
Apply a small isotopy to the class-$q$ non-local unknot so that it becomes flat in the region $t\in(-\frac{1}{2},\frac{1}{2})$; i.e. $\mathfrak{u}(t)=(t,0,0)$, for $t\in(-\frac{1}{2},\frac{1}{2})$ and so that it does not contain any self-intersection point in its associated disk diagram (see Figure \ref{fig:flattening}). Abusing notation, we will still denote this new ``flattened'' embedding by $\mathfrak{u}(t):\NS^1\to\ B^3/\sim$.

\begin{figure}[h!]
    \centering
    \includegraphics[width=0.7\linewidth]{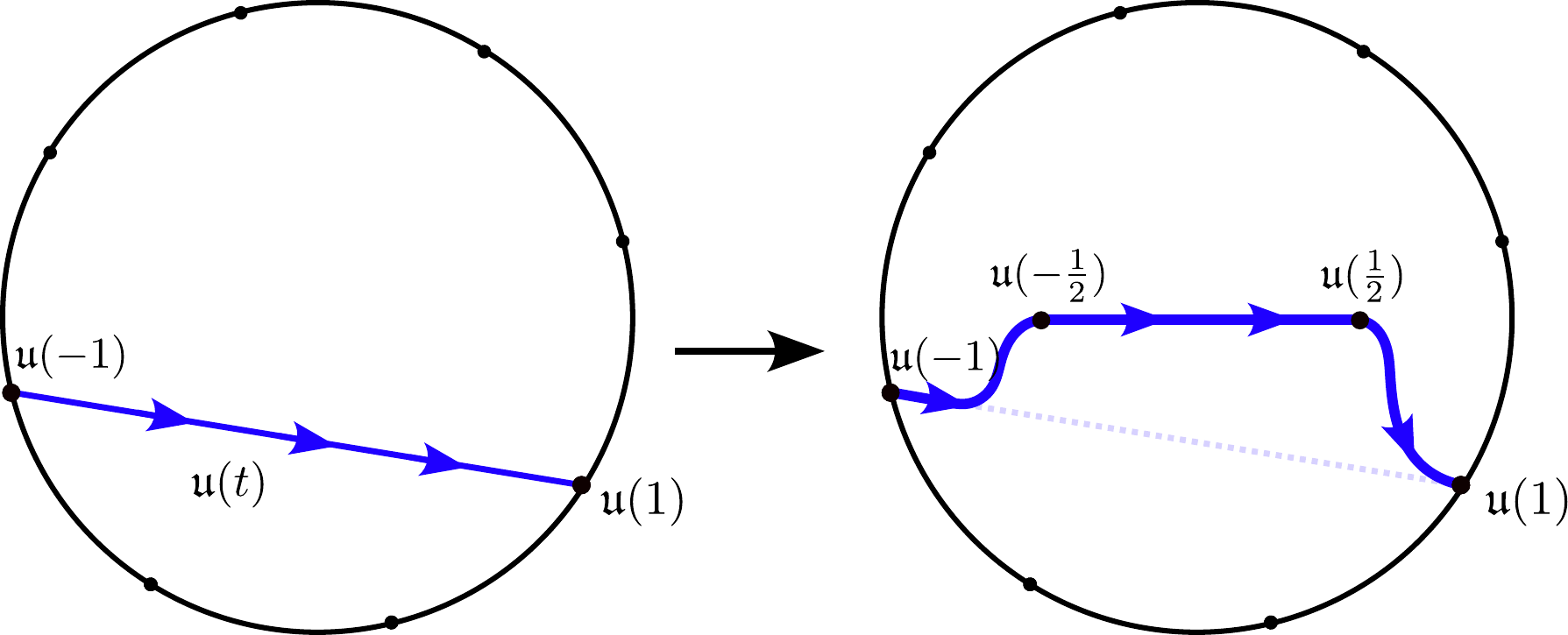}
    \caption{Visualization in a disk diagram of the deformation (through an isotopy) of the class-$q$ non-local unknot $\mathfrak{u}(t):\NS^1\to\ B^3/\sim$ so that it becomes flat in the region $t\in(-\frac{1}{2},\frac{1}{2})$; i.e. $\mathfrak{u}(t)=(t,0,0)$, for $t\in(-\frac{1}{2},\frac{1}{2})$ without introducing self-intersections in its associated disk diagram.}
    \label{fig:flattening}
\end{figure}

On the other hand, take the long knot $\gamma_{\mathcal{L}ong}$ and consider its $(1/2)$-shrinking $\gamma^{1/2}_{\mathcal{L}ong}(t)$. Replace now the flat portion of $\mathfrak{u}(t)$, i.e. the sub-arc corresponding to $t\in(-\frac{1}{2},\frac{1}{2})$, by $\gamma^{1/2}_{\mathcal{L}ong}(2t)$ (Figure \ref{fig:lensification_cons}); i.e. the resulting embedding $\gamma_{Lens}(t)$ after this replacement can be explicitly described as:
\begin{equation}
\label{eq:lensification}
\gamma_{Lens}(t) =
\begin{cases}
\mathfrak{u}(t) & \text{if } t \in [-1,1] \setminus (-\frac{1}{2},\frac{1}{2}), \\[6pt]
\gamma^{1/2}_{\mathcal{L}ong}(2t) & \text{if } t \in (-\frac{1}{2},\frac{1}{2}).
\end{cases}
\end{equation}

    \begin{figure}[h!]
    \centering
    \includegraphics[width=0.7\linewidth]{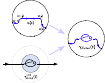}
    \caption{Construction of the lensification $\gamma_{Lens}(t)$ of a trefoil knot in a lens space $L(p,q)$ (for the particular choice of $(p,q)=(8,3)$). The top-left frame depicts the class-$q$ non-local unknot after a deformation that made it flat in the region $t\in(-\frac{1}{2},\frac{1}{2})$. The bottom-left frame depicts the associated long knot after a $\frac{1}{2}$-scale shrinking. The frame on the right shows the lensification $\gamma_{Lens}(t)$ built out of the previous two knots.}
    \label{fig:lensification_cons}
\end{figure}

   \begin{definition}\label{def: lensification}
       Let $\gamma:\NS^1\to\NS^3$ be an embedding. Take a point $p$ in the trace of $\gamma$ and an arbitrarily small ball  $B$ containing the point and such that $\partial B$ intersects the trace of $\gamma$ in two antipodal points. Recall from Remark \ref{rem:complementball} that the part of the embedding outside $B$ determines a well defined long embedding $\gamma_{\mathcal{L}ong}:\RR\to\RR^3$ with a precise diagrammatic description (Remark \ref{rem:complementball}). We then define the  \textbf{lensification} of $\gamma$ as the embedding $\gamma_{Lens}(t):\NS^1\to L(p,q)$ defined by Equation (\ref{eq:lensification}).
   \end{definition} 
   
 Choosing different points $p$ in the trace of $\gamma$ could a priori yield potentially different associated lensifications $\gamma_{Lens}(t):\NS^1\to L(p,q)$. Theorem \ref{independence} will show that this choice is unique up to isotopy.
 
\begin{remark}\label{rem:diagrammaticLensification}
Note that for each choice of point $p$ in the trace of $\gamma$ and once a diagram of $\gamma$ is given, then the associated lensification $\gamma_{Lens}(t):\NS^1\to L(p,q)$ (Def. \ref{def: lensification}) possesses a concrete diagrammatic description. Working in a disk diagram, take the class-$q$ non-local unknot $\mathfrak{u}(t)$ passing very close to the point $p$. Remove a small arc from both knots and join the remaining components without introducing new self-intersection points. In view of Definition \ref{def: lensification} and Eq. (\ref{eq:lensification}), this diagrammatic description yields the lensification of $\gamma$. In other words, Definition \ref{def: lensification} recovers the diagrammatic description in $\RR\PP^3$ given in \cite{Kauffman} (see e.g. Figures 33 to 36 in \cite{Kauffman}).
\end{remark}

\begin{remark}\label{rem:conSumLocal} We would like to note that the lensification of a knot (and thus the projectivization as a particular instance) can alternatively be defined as a connect-sum operation for pairs as described by D. Rolfsen in \cite[p. 40, Sec. 2.G]{rolfsen2003knots}. Given a knot $W\subset L(p,q)$ and a local knot $K\subset \NS^3$, we can consider the connected sum  of pairs $(L(p,q), W)\#(\NS^3,K)$. Note that this connected sum is well defined and thus does not depend on the choice of points. This is a consequence of the Annulus Theorem (see \cite[pp. 41--42, Ex. 2 \& 3]{rolfsen2003knots}). The fact that the connected sum of a local knot in $L(p,q)$ with any other knot is well defined is also implicitly considered in the PhD thesis of E. Manfredi \cite[p. 22]{manfredi2014knots}. 

Following this approach would yield an affirmative answer to the question about the equivalence between projectivizations of the same affine knot from a purely theoretical perspective. Nonetheless, even if this idea guarantees the \textit{existence} of an isotopy, it is not constructive. The goal of this note is not only to answer the question in the affirmative but rather to provide an explicit way of constructing isotopies between any two given lensifications of the same knot. We thus define the lensification operation building upon the notion of \textit{long knots}. This, in turn, will allow us to describe an algorithm that determines an explicit isotopy between any two given lensifications of the same knot. We will make use of an idea of A. Hatcher \cite{hatcher3} developed in the realm of embedding spaces.
\end{remark}

\begin{theorem}\label{independence}
Let $\gamma:\NS^1\to\NS^3$ be an embedding. All possible lensifications of $\gamma$ in $L(p,q)$ are isotopic. In other words, if two lensification  $\gamma_{\mathcal{L}ens}^1, \gamma_{\mathcal{L}ens}^2$ were produced out of two different long knots $\beta_1,\beta_2:\RR\to\RR^3$, both associated to $\gamma$, respectively, then $\gamma_{\mathcal{L}ens}^1$ and $\gamma_{\mathcal{L}ens}^2$ are isotopic in $L(p,q)$. \end{theorem}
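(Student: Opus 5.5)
The plan is to reduce the statement to the correspondence between knots in $\NS^3$ and long knots, and then to transport an isotopy of long knots into $L(p,q)$ through the insertion construction of Equation (\ref{eq:lensification}).

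Let $p_1,p_2$ be the two chosen points on the trace of $\gamma$, with small balls $B_1,B_2$. Let $\beta_1,\beta_2$ be the long knots they determine via Remark \ref{rem:complementball}.

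\textbf{Step 1 (the long knots are isotopic).} By Remark \ref{rem:complementball}, the canonical closure of each $\beta_i$ is, up to isotopy, the knot obtained by reattaching the small arc of $\gamma$ inside $B_i$, which is $\gamma$ itself. So $\tilde\beta_1$ and $\tilde\beta_2$ are both isotopic to $\gamma$ in $\NS^3$. Proposition \ref{IndependanceLongKnot} then gives an isotopy of long knots $\beta_s$, $s\in[0,1]$, from $\beta_1$ to $\beta_2$. By Remark \ref{LongKnotBall} we may take it to be fixed and flat on $\mathcal{O}p(\{-1,1\})$, with the non-trivial part inside $B^3$ throughout.

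\textbf{Step 2 (shrinking is compatible with the isotopy).} Apply the $(1/2)$-shrinking of Definition \ref{shrinking} parametrically, obtaining $\beta_s^{1/2}=\phi_{1/2}\circ\beta_s$. This is an isotopy of arcs in the ball $B_{1/2}$, fixed near the endpoints $(\pm\tfrac12,0,0)$ and flat there.

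\textbf{Step 3 (the insertion is continuous in $s$).} Fix one flattened representative $\mathfrak{u}$ of the class-$q$ non-local unknot, flat on $t\in(-\tfrac12,\tfrac12)$. Choose the flattening so that $\mathfrak{u}$ meets the closed ball $B_{1/2}$ only in its flat segment; this is possible by pushing the rest of $\mathfrak{u}$ radially outward toward $\partial B^3$. Define $\Gamma_s(t)$ by Equation (\ref{eq:lensification}), with $\beta_s^{1/2}(2t)$ in place of $\gamma^{1/2}_{\mathcal{L}ong}(2t)$. For each $s$ this is an embedding in $L(p,q)$, for two reasons. The two pieces agree and are flat near $t=\pm\tfrac12$. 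Their images are disjoint elsewhere, because $\beta^{1/2}_s$ lives in $\mathring B_{1/2}$ away from its endpoints, while the rest of $\mathfrak{u}$ lies outside $B_{1/2}$. Hence $\Gamma_s$ is an isotopy in $B^3/\sim$ from $\gamma^1_{\mathcal{L}ens}$ to $\gamma^2_{\mathcal{L}ens}$, supported in $F(B_{1/2})$. Note that $F$ is injective there, since $F$ only identifies points of $\partial B^3$.

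\textbf{Step 4 (independence of auxiliary choices).} Two flattenings of $\mathfrak{u}$ differ by an isotopy of $\mathfrak{u}$ that can be taken to preserve the flat segment together with a neighbourhood ball. This makes the lensification independent of that choice, so the comparison in Step 3 is legitimate. Orientation conventions are handled the same way, by fixing the direction of the flat axis once and for all.

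\textbf{Main obstacle: making Step 1 explicit.} The abstract argument above is short. Its main weakness is that Step 1 invokes \cite[Thm 2.1]{Budney2} and is therefore not constructive. To obtain an explicit isotopy, I would try first to build $\beta_s$ directly by Hatcher's ``pull-through'' idea. Move the cutting point from $p_1$ to $p_2$ along the arc of $\gamma$ between them. In the long-knot picture this amounts to shrinking the complementary knotted portion into a tiny ball. That ball is then slid along the strand (through the long axis, as in the Fox--Hatcher loop) so that the portion of $\gamma$ between $p_1$ and $p_2$ passes to the other side of the knotted part. Finally the result is re-expanded, which yields exactly the diagram of $\beta_2$.

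The delicate point is checking, diagrammatically in a disk diagram, that this sliding never passes the tiny ball through the far part of $\mathfrak{u}$. I expect this to hold because the entire motion can be kept inside $B_{1/2}$. This gives an explicit, diagram-level isotopy, which is the algorithmic content the paper is after.
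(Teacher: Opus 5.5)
Your Steps 1--3 are exactly the paper's proof. The closures of $\beta_1,\beta_2$ are both isotopic to $\gamma$, so the knot/long-knot correspondence gives an isotopy $\beta_s$ of long knots, and $\phi_{1/2}\circ\beta_s$ is then inserted into the fixed flattened $\mathfrak{u}$. Your requirement that $\mathfrak{u}$ meet $B_{1/2}$ only along its flat segment usefully spells out what the paper tacitly needs for each $\gamma_s$ to be an embedding.

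Step 4 is unnecessary, because both lensifications are built from the same fixed $\mathfrak{u}$; as stated, its claim would also need its own proof. The closing ``pull-through'' sketch is not part of the proof either, and the paper's proof is equally non-constructive. As written, that sketch tacitly assumes the rest of $\gamma$ can be shrunk into a ball away from the arc between $p_1$ and $p_2$, which fails in general; the paper's constructive route is the sequence of HF-moves instead.
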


\begin{proof}
By hypothesis, the long knots $\beta_1$ and $\beta_2$  yield two knots isotopic to $\gamma$ when considering their canonical closures. This means that, according to the bijective correspondence between isotopy classes of knots in $\NS^3$ and long knots in $\RR^3$ (Remark \ref{longtoclosed}), the associated long knots $\beta_1$ and $\beta_2$ are thus isotopic within the space of long knots. Therefore, there exists a $1$-parametric smooth family of long knots $\left(\beta_s\right)_{s\in[1,2]}$ that connects $\beta_1$ with $\beta_2$. 

Define now the associated $1$-parametric family of $(1/2)$-shrinkings (Definition \ref{shrinking}); i.e. $\left(\beta^{1/2}_s\right)_{s\in[1,2]}:=(\phi_{\frac{1}{2}}\circ\beta_s)_{s\in[1,2]}$. Observe now that, since the $1$-parametric family $\left(\beta_s\right)_{s\in[1,2]}$ takes place within the space of long knots, the new family $\left(\beta^{1/2}_s\right)_{s\in[1,2]}$ satisfies two conditions:

\begin{itemize}
    \item[i)] $\beta^{1/2}_s(-\frac{1}{2})=(-\frac{1}{2},0,0)$ and $\beta^{1/2}_s(\frac{1}{2})=(\frac{1}{2},0,0)$ for every $s\in[1,2]$. 
    \item[ii)] $\beta_s^{1/2}\left([-\frac{1}{2},\frac{1}{2}]\right)\subset B_{\frac{1}{2}}$; i.e. the non-constant part of the isotopy lies within the ball of radius $\frac{1}{2}$.
\end{itemize}

This readily defines an isotopy of lensifications between $\gamma_{\mathcal{L}ens}^1$ and $\gamma_{\mathcal{L}ens}^2$. Indeed, it suffices to consider the following $1$-parametric family of knots:

\begin{equation}
\label{eq:lensificationPARAMETRIC}
\gamma_{s}(t) =
\begin{cases}
\mathfrak{u}(t) & \text{if } t \in [-1,1] \setminus (-\frac{1}{2},\frac{1}{2}), \\[6pt]
\beta^{1/2}_{s}(2t) & \text{if } t \in (-\frac{1}{2},\frac{1}{2}).
\end{cases}
\end{equation}
Therefore, the proof is complete. 
\end{proof}

\subsection{Adapting A. Hatcher's idea to lensifications in $L(p,q)$}\label{fox-hatcher}
In this last section we will first describe the original idea from A. Hatcher's work \cite{hatcher3} and then we will show how to export it to the setting of lensifications of knots in lens spaces.

\subsubsection{The Fox-Hatcher loop}\label{sec:algorithm}

In \cite{hatcher3}, A. Hatcher discusses global homotopical properties of the space of knots in the $3$-sphere. Among other ideas, he describes a loop in the space of long knots; i.e. an isotopy of long knots that starts and ends in the same long knot. This loop is commonly known as the Fox-Hatcher loop. It was actually first introduced in \cite{Fox1966Rolling} in 1966 by R. Fox and later studied more extensively by A. Hatcher \cite{hatcher3} in 2002. He shows that this loop yields a non-trivial element in the fundamental group of any long knot space except for the long unknot-component. We will follow the exposition from the work of S. Kanou and K. Sakai \cite{KanouSakai2023}. See also the work of T. Fiedler \cite{Fiedler2019}.

Let us introduce some notions first. Start with a parametrization of a knot $\gamma:\NS^1\to\NS^3\subset\mathbb{C}^2$ equipped with a framing $\omega(t)$; i.e. $\omega(t)$ denotes an orthonormal basis of $T_{\gamma(t)}\NS^3$, of the form $\omega(t)=\left(\frac{\gamma'(t)}{||\gamma'(t)||},v_2(t),v_3(t)\right)$. Additionally, assume without loss of generality that the knot is parametrized by arclength; i.e. $||\gamma'(t)||=1$ for all $t\in\NS^1\simeq [0,2\pi]$ and thus $\omega(t)=\left(\gamma'(t),v_2(t),v_3(t)\right)$. 

Consider the following loop of matrices 
\[
A_\theta:=\left( \gamma(\theta) \, \middle| \, \gamma'(\theta)  \, \middle|   v_2(\theta)\, \, \middle| \, v_3(\theta) \right)\in\SO(4),\quad \theta\in\NS^1\simeq [0,2\pi],
\]

and define now the following loop of knots:
\begin{equation}\label{stereo}
\gamma_\theta(t):=(A_\theta)^{-1}\cdot\gamma(t+\theta), \quad \theta\in\NS^1\simeq [0,2\pi].
\end{equation}

Observe that this loop of knots satisfies two properties:

\begin{itemize}
    \item[$i)$] $\gamma_\theta(0)=(A_\theta)^{-1}\cdot\gamma(\theta)=(1,0,0,0)$,
    \item[$ii)$] $\frac{d}{dt}\left(\gamma_\theta(t)\right)|_{t=0}=(A_\theta)^{-1}\cdot\gamma'(\theta)= (0,1,0,0)$.
\end{itemize}

Condition $i)$ codifies the fact that the basepoint of the knot remains fixed all along the loop and, furthermore, coincides with the north pole of $\NS^3$; i.e. with $N=(1,0,0,0)\in\mathbb{S}^3$. Analogously, condition $ii)$ codifies the fact that the derivative at the basepoint of the knot remains fixed (since it coincides with $(0,1,0,0)\in\NS^3$) through the whole loop as well. This yields a loop of long knots in $\RR^3$ (Remark \ref{Remark:Emb*}) built out of a framed knot in $\NS^3$. This is often called the \textbf{Fox-Hatcher} loop in the literature.

\begin{remark}\label{rem:koytcheff}
There is a nice geometric interpretation of this loop \cite{hatcher3}. Place a small bead (i.e. a small $3$-ball $B$) on the closure of a framed knot in $\NS^3$ (recall Remark \ref{rem:complementball}). The part of the knot outside this bead; i.e. on $\NS^3\setminus B$ yields a long knot (Remark \ref{rem:complementball}). Moving this bead all along the knot (as if it were a wire supporting it) while respecting the framing  yields the Fox-Hatcher loop once the bead returns to its initial position.

More rigorously, as noted in \cite{HavensKoytcheff2021}, this process can be equivalently understood as moving the point $\infty\in\NS^3$ all along the trace of $\gamma$ and simultaneously taking the stereographic projection onto $\RR^3$ while respecting the given framing. This is in fact equivalent to the expression in Eq. (\ref{stereo}), where $\infty=(1,0,0,0)\in\NS^3$. 
\end{remark}

\subsubsection{The blackboard framing case}

When we have a regular projection of a knot, we can take the blackboard framing associated to it. As such, the previous construction takes an immediate diagrammatic description. The effect of applying the loop from Eq. (\ref{stereo}) to $\gamma$ is clear from a diagrammatic viewpoint. The corresponding loop of knots $\gamma_\theta(t)$  (see Figure \ref{fig:sliding}) has constant evaluation at the origin $\gamma_\theta(0)$ (black marked point in the six frames) with constant derivative $\frac{d}{dt}(\gamma_\theta(t))|_{t=0}$ (black arrow). Additionally, since the framing is the blackboard framing (i.e. a normal vector to the projection plane determines this framing), this can be regarded, up to homotopy, as a loop where the knot slides rigidly over itself pivoting on its fixed basepoint with derivative, as is shown in Figure \ref{fig:sliding}.

\begin{figure}[!h]
    \centering
    \includegraphics[width=0.6\linewidth]{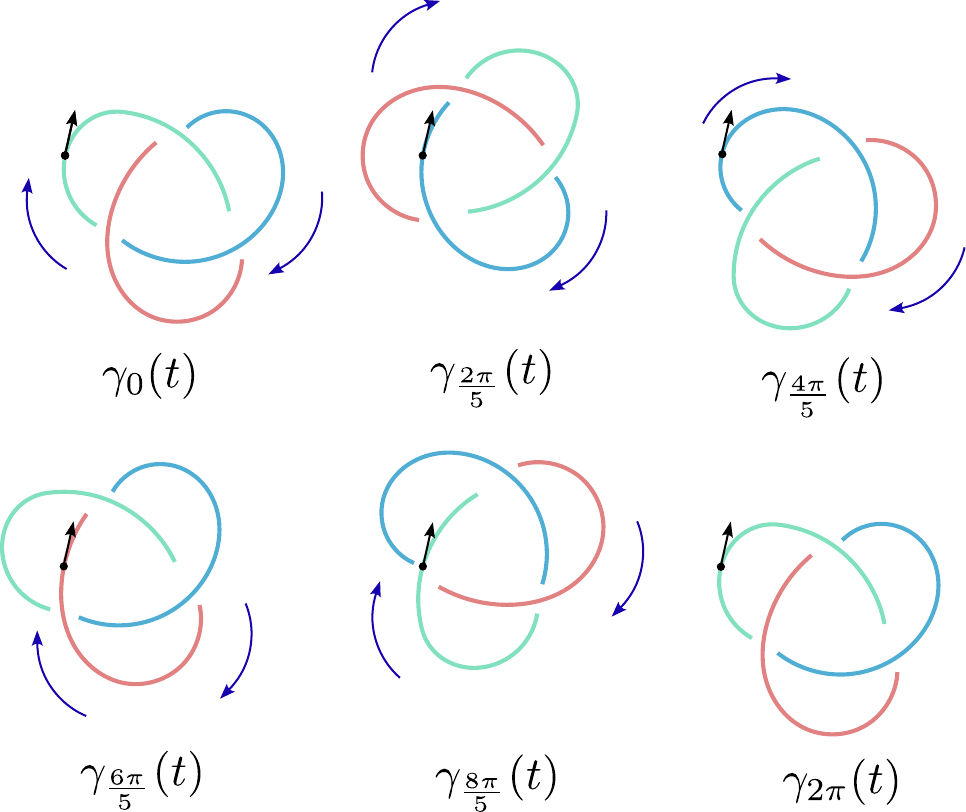}
    \caption{Given a knot $\gamma$ with a diagram, regard it as a framed knot equipped with the blackboard framing. The corresponding loop of knots $\gamma_\theta(t)$ (Eq. (\ref{stereo})) has constant evaluation at the origin $\gamma_\theta(0)$ (black marked point in the six frames) with constant derivative $\frac{d}{dt}(\gamma_\theta(t))|_{t=0}$ (black arrow). It can thus be regarded as a loop where the knot slides rigidly over itself with fixed basepoint and derivative. We have coloured some branches of the knot in order to provide a more clear intuition.}
    \label{fig:sliding}
\end{figure}

\begin{remark}\label{beadBlackboard}
We can therefore obtain in a straightforward fashion as well the corresponding loop of long knots in a diagrammatic way following Remark \ref{rem:complementball}. Indeed, note that this is equivalent to the loop described by moving the bead (as in Remark \ref{rem:koytcheff}) all along the knot following the blackboard framing, i.e. without introducing further rotations with respect to the projection plane. In other words, each knot $\gamma_{\theta}$ in the loop (for $\theta\in\NS^1$) can be diagrammatically represented as the long knot arising by removing a small segment around $\gamma_\theta(0)$ and gluing it to the long axis in the obvious manner.
\end{remark}

There is a good diagrammatic description due to A. Hatcher \cite{hatcher3, KanouSakai2023} of the associated loop of long knots; i.e. in the case where the chosen framing is the blackboard framing. Start with the diagram of the long knot and, following the first branch according to the orientation of the long knot (from left to right), focus on its first crossing. There are two possibilities.
\begin{itemize}
    \item[{{Case 1}}.] It represents an upper branch; i.e. it passes \textbf{over} the other branch. Then pull the corresponding underpass all under the knot so that it yields an underpass at the rightmost part of the diagram.

    \item[{{Case 2}}.] It represents a lower branch; i.e. it passes \textbf{under} the other branch. Then pull the corresponding overpass all over the knot so that it yields an underpass at the rightmost part of the diagram.
\end{itemize}

The moves we just described in Case 1 and Case 2 are called Fox-Hatcher moves or, in short, \textbf{HF-moves} in \cite[Sec 2.]{KanouSakai2023}. Note that the process we just described transforms, one by one, each crossing on the left part of the diagram into a crossing on the right part of the diagram. If the total number of crossings in the diagram is $k\in\mathbb{N}$, then the process describes the Fox-Hatcher loop associated to the blackboard framing after applying $k$ steps. We will import this idea into the setting of lens spaces by passing from diagram of long knots in $\RR^3$ to disk diagrams of lensifications of knots in $L(p,q)$ (recall Subsection \ref{subs: fromlongtolens}).

\subsubsection{The algorithm for lensifications in $L(p,q)$.}\label{algorithm}

Consider a given diagram of a knot $\gamma:\NS^1\to\NS^3$ and two different lensifications $\gamma_{Lens}^1(t), \gamma_{Lens}^2(t)$. Take a disk diagram of each of these two projective knots exactly as explained in Remark \ref{rem:diagrammaticLensification}; i.e. the corresponding diagrams can be described by taking, respectively, points $p_1, p_2$ on the trace of $\gamma$, removing a small segment containing such point and then connecting the remaining part with the class-$q$ unknot $\mathfrak{u}(t)$. Additionally, note that, outside the $\frac{1}{2}$-radius disk, both knot diagrams coincide with $\mathfrak{u}(t)$ there. 

Since the two lensifications  $\gamma_{Lens}^1(t), \gamma_{Lens}^2(t)$ are constructed by removing a small arc (before connecting it to the class $q$-unknot) around two different points $p_1, p_2$ of the same knot, the associated long knots are related by sliding the basepoint along the knot from $p_1$ to $p_2$. This is precisely what the Fox-Hatcher loop does, which can be read directly in a diagram under the choice of the blackboard framing. The loop can be described by an application of a finite number of HF-moves to the long knots within the $\frac{1}{2}$-radius disk following the process. In other words, after a finite sequence of those moves, the roles of $p_1$ and $p_2$ are exchanged and we thus get an isotopy between the associated lensification  $\gamma_{Lens}^1(t)$ and $\gamma_{Lens}^2(t)$. Let us isolate the formulation of this algorithm as follows.

Start with a disk diagram of a lensification in $L(p,q)$ and assume without loss of generality that its diagram coincides with the class-$q$ unknot outside the radius $\frac{1}{2}$-disk. Starting with the first branch according to the orientation of the knot, focus on its first crossing. There are two possible cases.
\begin{itemize}
    \item[{{Case 1}}.] It represents an upper branch; i.e. it passes \textbf{over} the other branch in the crossing. Then pull the corresponding underpass all under the knot so that it yields an underpass at the rightmost part of the diagram within the radius $\frac{1}{2}$-disk (this is represented in Figure \ref{fig:algoOver}). This movement must take place within such disk, it cannot escape it.

\begin{figure}[!h]
    \centering
    \includegraphics[width=0.85\linewidth]{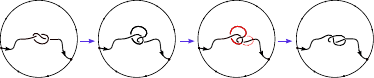}
    \caption{\textbf{Case $1$ in the algorithm}: the first branch according to the orientation of the knot in the disk diagram represents an upper branch in the first crossing. The algorithm specifies to pull the corresponding underpass (in red) all under the knot so that it yields an underpass in the other part of the diagram. The picture depicts this situation in the particular case of $L(8,3)$.}
    \label{fig:algoOver}
\end{figure}

    \item[{{Case 2}}.] 
    It represents a lower branch; i.e. it passes \textbf{under} the other branch. Then pull the corresponding overpass all over the knot so that it yields an overpass at the rightmost part of the diagram within the radius $\frac{1}{2}$-disk (this is represented in Figure \ref{fig:algoUnder}). This movement must take place within such disk, it cannot escape it.  
    
    \begin{figure}[!h]
    \centering
    \includegraphics[width=0.85\linewidth]{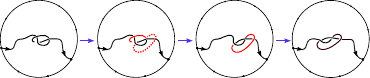}
    \caption{\textbf{Case $2$ in the algorithm}: the first branch according to the orientation of the knot in the disk diagram represents a lower branch in the first crossing. The algorithm indicates to pull the corresponding overpass (in red) all over the knot so that it yields an overpass in the other part of the diagram. Once again, this diagram corresponds to a lensification in $L(8,3)$.}
    \label{fig:algoUnder}
\end{figure}

\end{itemize}

The reader should note that all the moves in the algorithm take place in the interior part of the disk diagram; thus they are valid moves that yield actual isotopies between knots in $L(p,q)$ according to \cite[Corollary 3]{CattabrigaManfredi2018}.

\section{Applications of the algorithm}\label{sec: applications}

In this last section we will apply the algorithm to several pairs of projective knots from the literature \cite{Kauffman, Narayanan2025} whose equivalence has been raised as an open question. Each such pair is formed by two different projectivizations of the same given knot. In particular, if we apply the algorithm to each of them separately, we know that we will eventually reach the same given projective knot. This will thus provide an explicit isotopy.

\subsection{First application}\label{firstapplication}

The work \cite{Kauffman} states that the equivalence problem for the projective knots $K_1$ (top-left in Figure \ref{fig:pair2}) and $K_2$ (top-right in Figure \ref{fig:pair2}) is unknown: ``\textit{The paper ends with problems about these approaches and an example of multiple projectivizations of the figure-$8$ knot whose equivalence is unknown at this time}''. As an application of the present work, we will be able to provide an explicit isotopy for such multiple projectivizations of the figure-$8$ knot (Figure \ref{fig:pair2}), thus showing their equivalence. 

Although all arrows in Figure \ref{fig:pair2} are reversible (the figure describes an isotopy), we suggest the reader to follow the order indicated by the arrows for a better understanding of the sequence. Note, additionally, that the original pair of knots in \cite{Kauffman} were not oriented, but we have equipped them with an arbitrary orientation in order to apply the algorithm. Indeed, any isotopy of oriented knots readily lifts to an isotopy of unoriented knots. Furthermore, had we chosen the opposite orientation, the algorithm would yield a valid isotopy as well.

\begin{figure}[!h]
    \centering
    \includegraphics[width=0.85\linewidth]{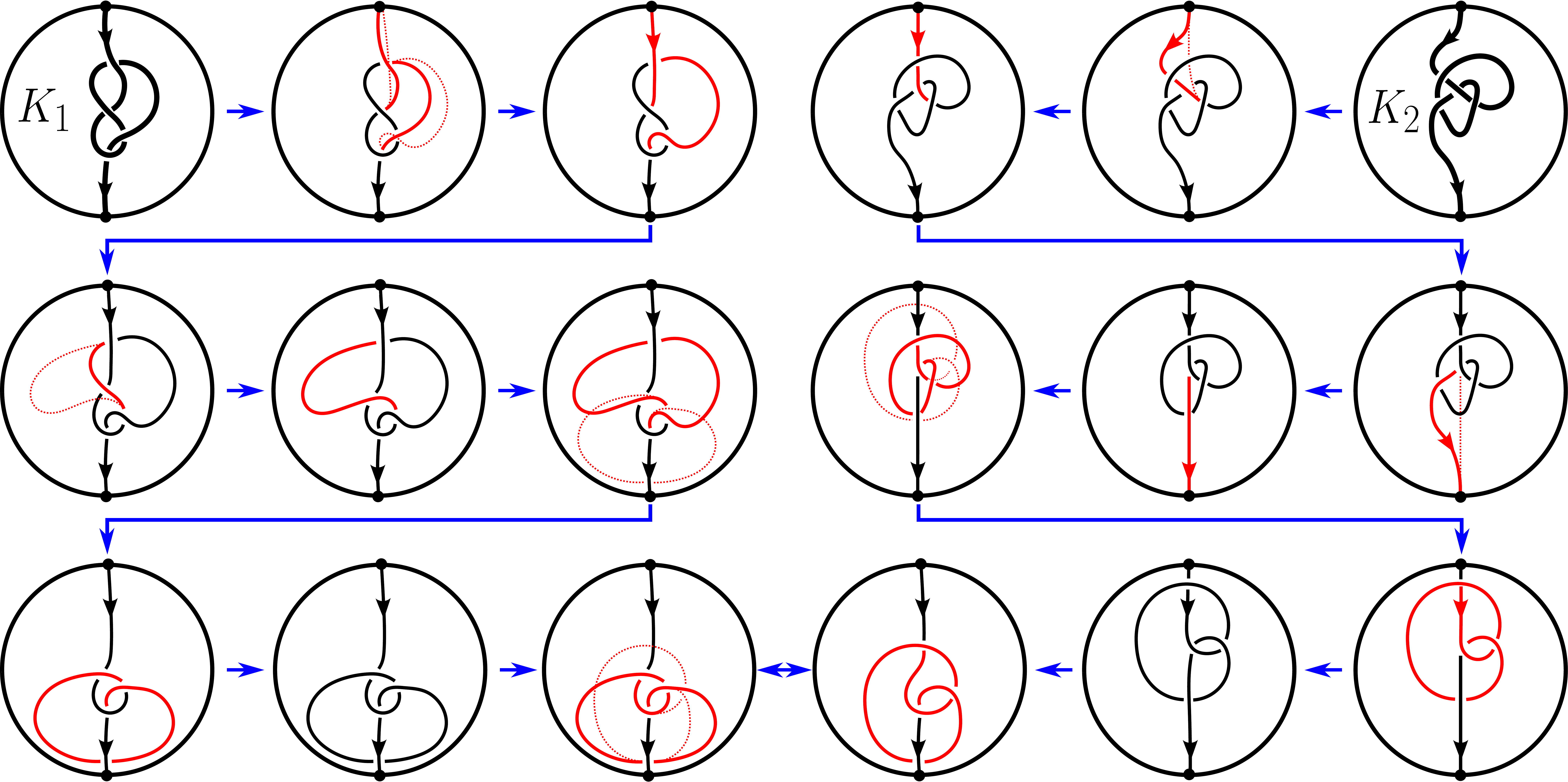}
    \caption{The problem of whether the multiple projectivizations of the figure-$8$ knot $K_1$ (top-left) and $K_2$ (top-right) are equivalent has been raised in \cite{Kauffman}. The figure depicts an application of the algorithm, which produces an explicit isotopy between the two and thus provides a constructive solution to the problem. Although all arrows are reversible (the figure describes an isotopy), we suggest the reader to follow the order indicated by the arrows for a better understanding of the sequence.}
    \label{fig:pair2}
\end{figure}


\subsection{Second application}

The question of whether the projectivizations $K_1$ (top-left in Figure \ref{fig:pair3}) and $K_2$ (bottom-right in Figure \ref{fig:pair3}) are isotopic was proposed by V. Narayanan in  \cite{Narayanan2025}. An application of the algorithm from Section \ref{algorithm} yields an explicit isotopy that we have depicted in Figure \ref{fig:pair3}, thus providing
a constructive solution. Note that the chosen orientation is now the opposite of the orientation chosen in the previous example but the algorithm applies in the exact same manner.

\begin{figure}[!h]
    \centering
    \includegraphics[width=0.8\linewidth]{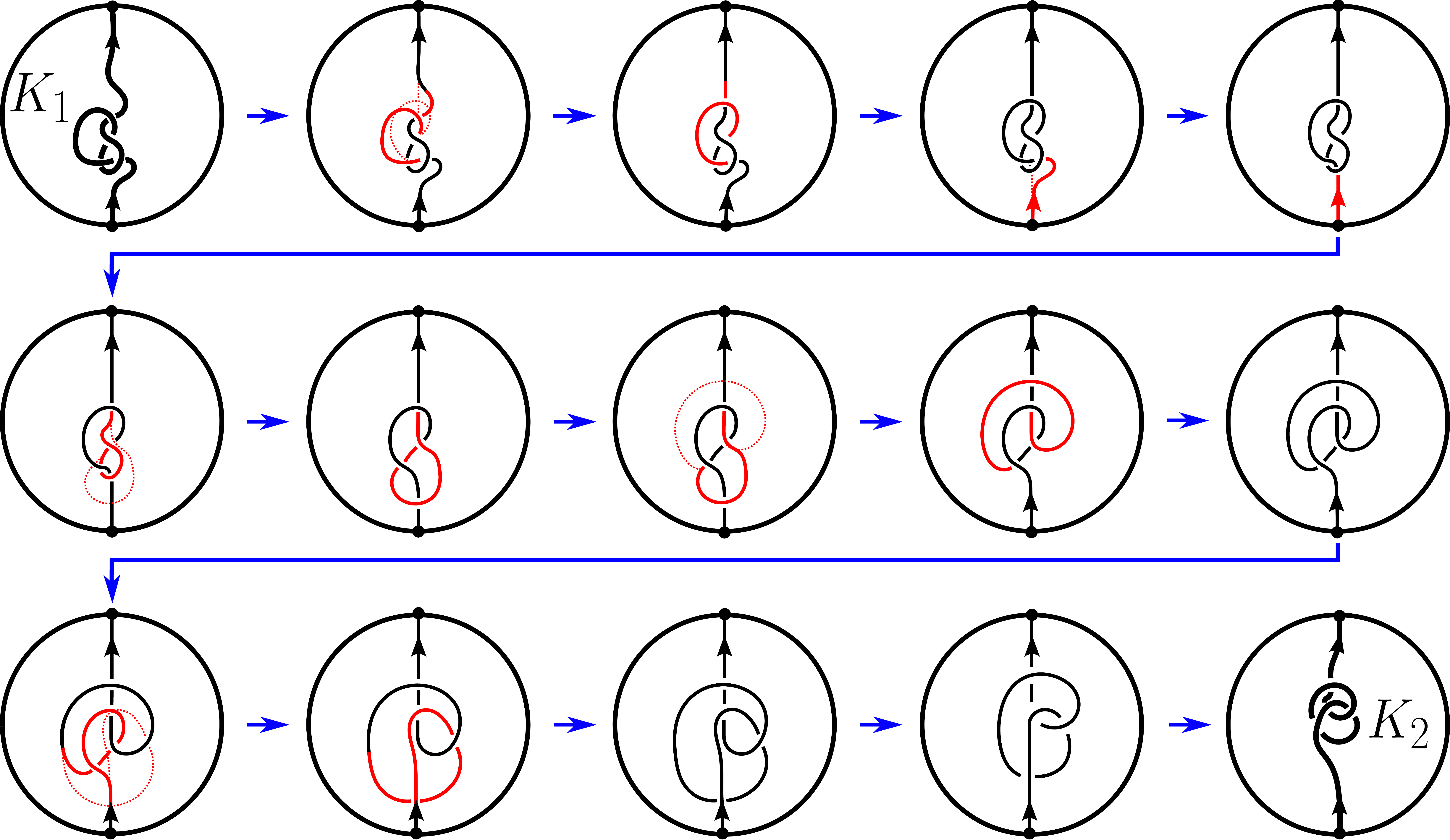}
    \caption{The question of whether the projective knots $K_1$ (top-left) and $K_2$ (bottom-right) are isotopic was raised in  \cite{Narayanan2025}. The figure depicts an explicit isotopy obtained via the algorithm presented in Section \ref{algorithm}, thus yielding a constructive solution to the question.} 
    \label{fig:pair3}
\end{figure}

\subsection{Third application}

In our last application we consider a pair of knots introduced in a preliminary version of the article \cite{Kauffman}, which first appeared as a preprint in arXiv:2401.06050v1 [math.GT] (2024) and which raised the equivalence problem for such pair of knots. These are depicted as $K_1$ (top-left in Figure \ref{fig:pair1}) and $K_2$ (bottom-right in Figure \ref{fig:pair1}). An application of the algorithm from Section \ref{algorithm} yields an explicit isotopy between the two.

\begin{figure}[!h]
    \centering
    \includegraphics[width=0.73\linewidth]{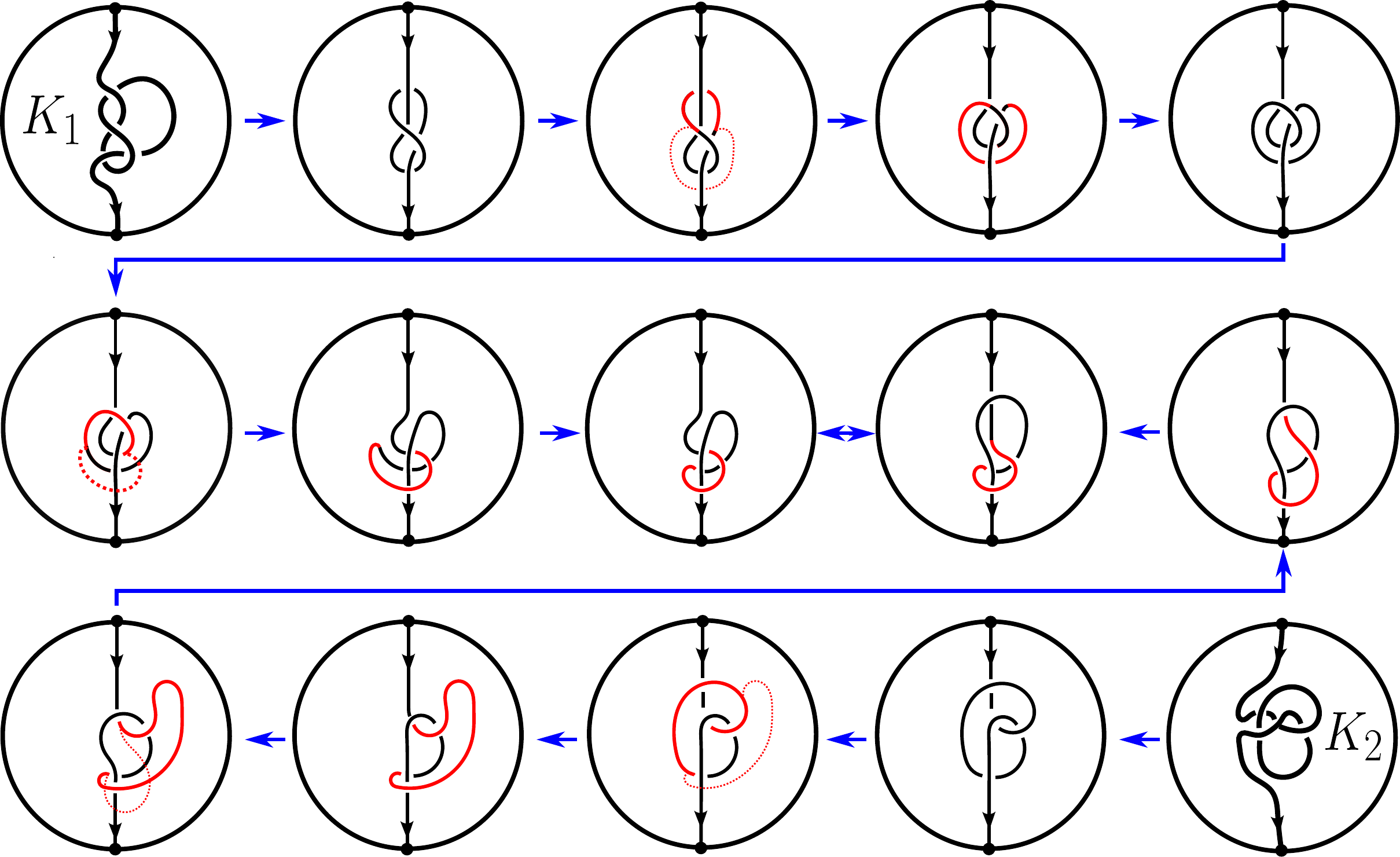}
    \caption{The equivalence problem for the projective knots $K_1$ (top-left) and $K_2$ (bottom-right) was raised in a prepublished version of the work \cite{Kauffman}. The figure exhibits an explicit isotopy between the two obtained via the algorithm presented in Section \ref{algorithm}, therefore yielding a constructive solution to the problem.}
    \label{fig:pair1}
\end{figure}
\bibliographystyle{abbrv}
\bibliography{bibliography}

\end{document}